\theoremstyle{plain}
\newtheorem{theorem}{Theorem}
\newtheorem{corollary}[theorem]{Corollary}
\newtheorem{lemma}[theorem]{Lemma}
\newtheorem{proposition}[theorem]{Proposition}
\theoremstyle{definition}
\newtheorem{definition}[theorem]{Definition}
\newtheorem{example}[theorem]{Example}
\newtheorem{remark}[theorem]{Remark}
\title{An Introduction to Perfectoid Fields}
\author{Ehsan Shahoseini$^1$\thanks{Mentor, PhD candidate at Tarbiat Modaress University} \and Soroush Pasandideh$^2$\thanks{Mentee, Undergraduate at Shahid Beheshti University} }
\begin{document}
	\maketitle
	
	\begin{abstract}
		This survey is a final project of Twopole DRP  in fall 20201. In this paper we try to understand a tiny part of the vast theory of perfecoid spaces, called perfectoid fields. We start by giving some motivation and historical background. Then we define the notion of a perfectoid field and working through some examples.

		\noindent\textbf{Keywords:} Frobenius endomorphism, Perfectoid fields, Semiperfectness, Perfectoid Spaces
	\end{abstract}

	\tableofcontents
	
	\section{Introduction}
		\label{sec:intro}

	There are moments in the history of mathematics that introduce a new idea which helps people to think in somehow different perspective.Those ideas led to a series of new discoveries. One can say that defining the notion of "schemes"  by Alexander Grothendieck was one of those moments and we all know the importence of that defenition in later works, e.g. Delinge's works on Weil's conjectures was inspired heavily by the notion of schemes. One can argue that Peter Scholze's work on defining the notion of perfectoid spaces is also a big moment in some sense. Of course, mathematics does not come from nowhere and like other deep results in mathematics it's heavily rooted in the work of others. We have not much to do with perfectoid spaces in this survey, but by restricting ourself to the notion of perfectoid fields, we try to khow what they are and work through some examples. The theory of perfecoid fields has its roots in perfect fields and affinoid algebras. Scholze describes “phases” of study, applying them to
different topics in number theory. The first phase was giving a correspondence
between geometry in characteristic zero and characteristic $p$, with the goal of
proving Deligne’s weight-monodromy conjecture. The second phase was studing $p$-adic Hodge theory, and how it varies in families. 
Third is the realization of important special cases of “infinite-type rigid geometry”
via perfectoid spaces.  
	
	%\lipsum[2]
	\subsection{Motivations}
	The original aim of the theory of perfectoid spaces was to prove Deligne’s weight-monodromy
conjecture over $p$-adic fields by reduction to the case of local fields of equal characteristic
$p$, where the result is known. In order to so, the theory of perfectoid spaces establishes a
general framework relating geometric questions over local fields of mixed characteristic with
geometric questions over local fields of equal characteristic. Also, the theory of perfectoid fields merge somehow the notion of perfect fields and affinoid rings. There exists also the notion of deeply ramified fields which was intoduced brfore \cite{GR}, and it's closely related to the theory of perfectoid fields. 
	%\lipsum[3]
		\newpage

	\subsection{Previous Results}
Perfectoid spaces are a class of algebro-geometric
objects living in the realm of $p$-adic geometry that
were introduced by Peter Scholze \cite{schz12} in his
Ph.D. thesis. Their	 definition is heavily inspired by a
classical result in Galois theory (see Theorem \ref{eq:1}) due
to Fontaine and Wintenberger, and the resulting
theory has already had stunning applications.The miracle of perfectoid spaces is that they provide a functor (called tilting)
between geometric objects in characteristic 0 and in characteristic $p$; in the case of
a single point this essentially recovers a construction of Fontaine and Wintenberger
that underlies $p$-adic Hodge theory. Subsequent work by Scholze has demonstrated
without a doubt that perfectoid spaces are a powerful new tool across many aspects
of algebraic number theory. Perfectoid spaces are  in one sense a tool. It is just a relatively new method to
translate problems from ch. $0$ to ch. $p$, where one may expect to get a solution.
The notion of a perfectoid field is closely related to the notion of a deeply
ramified field. A perfectoid field $K$ is deeply ramified, and
conversely, a complete deeply ramified field with valuation of rank 1 is a perfectoid field.
And also, this theory is a new treatment to some old problems in Algebraic geometry and other related fields; to mention a few one can say that there are tremendous applications of perfectoid spaces in: $p$-adic Hodge theory, rigid analytic geometry, Lubin-Tate spaces, Local Langlands Correspondence and Shimura varieties, see for example:\cite{SH}, \cite{MPS}, \cite{PSAS}, \cite{shlz16}, \cite{BS}, \cite{FS}, \cite{schlzwan}, \cite{schlz13}, \cite{schlz14}, \cite{SMB}, \cite{KL}.

	\begin{theorem}
	\label{eq:1}
	Fontaine-Wintenberger:
	The absolute Galois groups of $\mathbb{Q}_p(p^\frac{1}{p^\infty})$ and $\mathbb{F}_p((t^\frac{1}{p^\infty}))$ are canonically isomorphic. \cite{serre}
	\end{theorem}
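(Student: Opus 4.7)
The plan is to deduce this isomorphism from the tilting correspondence for perfectoid fields, which is Scholze's reformulation of Fontaine--Wintenberger's original construction of norm fields. First I would replace $\mathbb{Q}_p(p^{1/p^\infty})$ and $\mathbb{F}_p((t^{1/p^\infty}))$ by their completions $K$ and $K^{\flat}$ with respect to the $p$-adic and $t$-adic valuations; since the completion of a Henselian valued field has the same absolute Galois group (a standard application of Krasner's lemma to approximations of roots of monic separable polynomials), this reduction is harmless.

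Next I would verify that $K$ is a perfectoid field: it is complete, non-discretely valued, of mixed characteristic $(0,p)$, and the Frobenius on $\mathcal{O}_K/p$ is surjective because every residue class is reached by raising $p^{1/p^{n+1}}$ to the $p$-th power. The field $K^{\flat}$ is tautologically a complete perfect non-archimedean field of characteristic $p$. One then identifies $K^{\flat}$ with the tilt of $K$ via the multiplicative bijection
\[
\mathcal{O}_{K^{\flat}} \xrightarrow{\sim} \varprojlim_{x \mapsto x^p} \mathcal{O}_K/p,
\]
under which $t^{1/p^n}$ corresponds to the compatible system $(p^{1/p^{n+k}} \bmod p)_{k \geq 0}$.

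The key input is the tilting equivalence on finite étale algebras: the functor $L \mapsto L^{\flat}$ from finite étale $K$-algebras to finite étale $K^{\flat}$-algebras is an equivalence of categories preserving degrees. Granting this, the finite étale sites of $\operatorname{Spec} K$ and $\operatorname{Spec} K^{\flat}$ are equivalent, so by Grothendieck's formalism of the étale fundamental group their absolute Galois groups are canonically isomorphic as profinite groups.

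The hard part will be the tilting equivalence itself. I expect to establish it through almost mathematics: pass from $K$-algebras to almost $\mathcal{O}_K^a$-algebras, reduce modulo $p$ to land in characteristic $p$, and then lift back using the perfectness of $\mathcal{O}_{K^{\flat}}$ together with a Witt vector construction (Fontaine's functor $A_{\mathrm{inf}}$). The crux is the almost purity theorem, namely that every finite étale extension $L/K$ gives rise to an almost finite étale extension $\mathcal{O}_L^a/\mathcal{O}_K^a$; this ultimately rests on Tate's computation of differents in the tower $\mathbb{Q}_p(p^{1/p^n})/\mathbb{Q}_p$, which shows that the ramification becomes asymptotically trivial in an almost sense and thereby transports finite étale data faithfully between the two characteristics.
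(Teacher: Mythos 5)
The paper does not actually prove this theorem: it is stated as a known result with a citation to the original Fontaine--Wintenberger article, whose own argument runs through the field of norms for arithmetically profinite extensions rather than anything resembling your plan. So there is no in-paper proof to match against; what can be said is that your route is the standard modern (Scholze-style) derivation, and it is consistent with the machinery the paper develops afterwards: the perfectoid-ness of $\widehat{\mathbb{Q}_p(p^{1/p^\infty})}$ and the identification of its tilt with $\widehat{\mathbb{F}_p((t^{1/p^\infty}))}$ are exactly the content of the paper's Example in Section 3.2, and the degree-preserving equivalence of finite extensions you invoke is Theorem \ref{tilting}(2). Your reduction steps are sound: both $\mathbb{Q}_p(p^{1/p^\infty})$ and $\mathbb{F}_p((t^{1/p^\infty}))$ are Henselian (being algebraic extensions of complete discretely valued fields), so Krasner's lemma lets you pass to completions without changing the absolute Galois group, and the semiperfectness check is the computation $K^\circ/p \cong \mathbb{F}_p[t^{1/p^\infty}]/(t)$ that the paper carries out. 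The honest caveat, which you acknowledge, is that the entire mathematical weight sits in the tilting equivalence itself (almost purity, Fontaine's $A_{\mathrm{inf}}$, Tate's different computations); as written this is a correct proof \emph{plan} whose hard step is deferred to a black box, which is no worse than the paper's own treatment but should not be mistaken for a self-contained argument. What your approach buys over the cited original is uniformity: the same tilting formalism proves the theorem for any perfectoid field and its tilt, whereas the norm-field construction is tailored to the specific APF tower.
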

	This theorem is one of the foundational cornerstones of $p$-adic Hodge theory. Moreover, it is true in a wide variety of cases. 
	\newpage
	\section{Defintiosns and Theorems}
	The definitions and theorems in ths section are from \cite{schz12}, \cite{SLF}. 
	\\
	\begin{remark}
	Notation: For a ring $R$ of characteristic $p$, we denote the Frobenius endomorphism of $R$ (which sends $x$ to $x^p$) by $\Phi_R$. If there is no ambiguity we use $\Phi$ instead of $\Phi_R$ .
	\end{remark}
	\subsection{Preliminaries}
	Perfect fields are significant because Galois theory over these fields becomes simpler, since separablity is automatically satisfied over these fields.
	\begin{definition}
	\textit{Perfect Field}: A perfect field is a field $F$ such that every algebraic extension is separable. Any field of characteristic zero, such as the rationals or the $p$-adics, and any finite field is a perfect field.
\end{definition}		
\begin{remark}
A field of characteristic $p$ is perfect iff its Frobenius endomorphism is surjective (equivalently, bijective).
\end{remark}
\begin{definition}
A ring of characteristic $p$ is called perfect (resp. semiperfect) if its Frobenius endomorphism is bijective (resp. surjective).
\end{definition}
An example of an imperfect field is the field $\mathbb{F}_q(x)$, since the Frobenius sends $x \mapsto x^p$ and therefore it is not surjective. we can make imperfect fields perfect by a process called \emph{perfection} that makes Forbenious surjective.
	\begin{definition}
	Let $K$ be a field of characteristic $p$. The perfection of $K$, denoted by $K^{perf}$, is the minimal algebraic extention of $K$ which is perfect, and obtained as follow: fix an algebraic closure $\bar{K}$ of K (note that $\bar{K}$ is perfect). Then, $K^{perf} = K \left(\Phi_{\bar{K}}^{-n}(K),\forall n \in \mathbb{N}\right)$. 
	\end{definition}
	\begin{example}
	Let $K=\mathbb{F}_p((t))$. then $K^{perf}=\mathbb{F}_p((t^{\frac{1}{p^\infty}}))$
	\end{example}

		\begin{definition}
	A field which is complete with respect to a nonarchimedean discrete valuation is called a local field
		\end{definition}
		
A local field with characteristic $p>0$ is isomorphic to the field of power series in one variable whose coefficients are in a finite field. A local field of characteristic zero is either $\mathbb{Q}_p$, the field of $p$-adic numbers, or finite extention of it.
	%\begin{example}
%The ring of $p$-adic integers $\mathbb{Z}_p$ is the inverse limit of the inverse system $(A_n, \Phi_n) $ in which $A_n=\mathbb{Z}_p/p^n\mathbb{Z}_p$ and $\Phi_n= A_n \rightarrow A_{n-1}$:
%	\[ 
%			\mathbb{Z}_p = \varprojlim (A_n, \Phi_n).
%	\]
	
%	\end{example}

	\begin{remark}
	 A complete field is a field equipped with a metric and complete with respect to that metric. 
	\end{remark}
	\begin{remark}
	The residue characteristic is the characteristic of the residue field.
	\end{remark}

	\subsection{Perfectoid Fields}
	\begin{definition}
	A \textbf{\emph{perfectoid field}} is a complete nonarchimedean field $K$ of residue
characteristic $p>0$ whose associated rank-1-valuation is nondiscrete, such that the
Frobenius is surjective on $K^\circ/p$.
			\end{definition}
			\begin{lemma}
			let $ |\cdot|: K \rightarrow \Gamma \cup \lbrace 0 \rbrace $ be the unique 		 			rank-1-valuation on $K$, where $\Gamma = |K^\times|$ is chosen minimal. Then 			$\Gamma$ is p-divisible.
			\end{lemma}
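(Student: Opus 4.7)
My plan rests on two ingredients: (a) surjectivity of $\Phi$ on $K^\circ/p$ combined with the ultrametric inequality, which will give $p$-divisibility for values in the range $(|p|,1]$; and (b) density of $\Gamma$ in $\mathbb{R}_{>0}$, which follows from nondiscreteness and will let me reduce the general case to this good range.

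First I would establish the special case: for any $x \in K^\circ$ with $|x| > |p|$, there exists $y \in K^\circ$ with $|y|^p = |x|$. Indeed, surjectivity of $\Phi$ on $K^\circ/p$ yields $y, z \in K^\circ$ with $y^p = x + p z$. Then $|y^p - x| = |p||z| \leq |p| < |x|$, and the ultrametric principle $|a-b| < |a| \Rightarrow |b|=|a|$ forces $|y^p| = |x|$, i.e.\ $|y|^p = |x|$.

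Next I would reduce the general case. Since $\Gamma$ is a nondiscrete subgroup of the multiplicative group $\mathbb{R}_{>0}$, a standard fact (any nondiscrete subgroup of $\mathbb{R}$ is dense) shows that $\Gamma$ is dense in $\mathbb{R}_{>0}$. Given an arbitrary $\gamma \in \Gamma$, I would seek $\beta \in \Gamma$ with $\gamma \beta^p \in (|p|, 1]$; equivalently, $\beta \in \bigl((|p|/\gamma)^{1/p},\, (1/\gamma)^{1/p}\bigr]$. This interval has endpoint ratio $|p|^{-1/p} > 1$, so by density it contains some $\beta \in \Gamma$. Writing $\gamma\beta^p = |x|$ for some $x \in K^\circ$ (possible because $\gamma\beta^p \leq 1$), the special case supplies $\delta \in \Gamma$ with $\delta^p = \gamma\beta^p$, and then $\gamma = (\delta/\beta)^p$ with $\delta/\beta \in \Gamma$. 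This proves $p$-divisibility.

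The conceptual obstacle — and the reason both hypotheses in the definition of a perfectoid field are needed — is that surjectivity of Frobenius on $K^\circ/p$ only controls $p$-th roots up to an error of size $|p|$, so it delivers $p$-th roots in $\Gamma$ directly only for values exceeding $|p|$. Nondiscreteness is precisely what allows us to always translate an arbitrary value into this controllable window by multiplication by a $p$-th power from $\Gamma$ itself; without it, the shift argument in the second paragraph would collapse.
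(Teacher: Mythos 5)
Your proof is correct, and it is the standard argument: surjectivity of Frobenius on $K^\circ/p$ gives $y^p = x + pz$, the ultrametric inequality then yields exact $p$-th roots of values in the window $(|p|,1]$, and density of the nondiscrete rank-$1$ value group lets you translate an arbitrary $\gamma\in\Gamma$ into that window by a $p$-th power from $\Gamma$. The paper states this lemma with no proof at all, so there is no argument to compare against; yours fills that gap cleanly. One minor observation: when $K$ itself has characteristic $p$ one has $|p|=0$, so your first step already covers every nonzero value $\leq 1$ and the density/translation step is only genuinely needed in characteristic $0$ --- worth a sentence so the reader sees both cases are handled.
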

			The class of perfectoid fields naturally separates into the fields of 			 			characteristic 0 and
			those of characteristic $p$
\begin{proposition}
In characteristic p, a perfectoid field is the same as a complete
perfect field.
\end{proposition}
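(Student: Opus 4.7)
The plan is to translate the perfectoid condition into a statement purely about the Frobenius on the integral subring $K^\circ$, and then to pass between surjectivity on $K^\circ$ and surjectivity on $K$ using the $p$-divisibility of the value group $\Gamma$ provided by the preceding lemma. First I would note that since $\mathrm{char}(K)=p$, the element $p$ vanishes in $K$, so $K^\circ/p = K^\circ$; hence the perfectoid hypothesis is equivalent to saying that $\Phi_{K^\circ}$ is surjective.

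For the direction that a perfectoid field is perfect, I would show that $\Phi_K$ is surjective (injectivity is automatic for Frobenius on any field of characteristic $p$). Given a nonzero $x \in K$, I would invoke the lemma to produce $a \in K^\times$ with $|a|^p = |x|$, using the $p$-divisibility of $\Gamma$. Then $x/a^p$ is a unit in $K^\circ$, so by surjectivity of $\Phi_{K^\circ}$ there is $b \in K^\circ$ with $b^p = x/a^p$. Consequently $(ab)^p = x$, giving the required $p$-th root in $K$.

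For the reverse direction, assume $K$ is a complete perfect nonarchimedean field of characteristic $p$ (with its rank-$1$ nondiscrete valuation). Given $x \in K^\circ$, perfectness yields $y \in K$ with $y^p = x$; but then $|y|^p = |y^p| = |x| \le 1$ forces $|y|\le 1$, so $y \in K^\circ$. Hence $\Phi_{K^\circ}$ is surjective, and $K$ is perfectoid.

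The main obstacle, if one can call it that, is the bookkeeping needed to promote surjectivity on $K^\circ$ to surjectivity on $K$; this is precisely the content of the $p$-divisibility of $\Gamma$, and once that lemma is available both implications reduce to one-line arguments. I would also highlight at the end that this proposition shows the genuinely new content of the definition of a perfectoid field lies in the mixed-characteristic case, since characteristic $p$ already collapses to the classical notion of a complete perfect nonarchimedean field.
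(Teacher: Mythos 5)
Your proof is correct and follows essentially the same route as the paper's: both reduce to the observation that $p=0$ gives $K^\circ/p = K^\circ$, and then pass between surjectivity of Frobenius on $K^\circ$ and on $K$. If anything, you supply the details the paper elides --- the inequality $|y|^p = |x| \le 1$ in one direction and the use of $p$-divisibility of $\Gamma$ in the other (where the paper simply invokes $K = K^\circ[\frac{1}{\varpi}]$) --- so your write-up is the more complete version of the same argument.
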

		\begin{proof}
	Let $K$ be  perfect $\Rightarrow$ $K \xrightarrow[\text{}]{\Phi} K$ is an automorphism, so $K=K^{\circ}[\frac{1}{\varpi}] \xrightarrow[\text{}]{\Phi} K^{\circ}[\frac{1}{\varpi}]$ is automorphism. Then, $ K^{\circ} \rightarrow  K^{\circ}$ is surjective. As $p=0$, $ K^{\circ}/p =  K^{\circ}$ Thus $K$ is perfectoid.
	\\ 
	Let $K$ be perfectoid $\Rightarrow$ $K^{\circ}/p \xrightarrow[\text{}]{\Phi} K^{\circ}/p$ is surjective, $p=0$ so $K^{\circ} \xrightarrow[\text{}]{\Phi} K^{\circ}$ is surjective. Thus $K=K^{\circ}[\frac{1}{\varpi}] \xrightarrow[\text{}]{\Phi} K=K^{\circ}[\frac{1}{\varpi}]$ is also surjective and hence $K$ is perfect.
	\end{proof}
		Now, we want to define a notion that enable us to ping-pong from
the category of all perfectoid fields to the category of perfectoid fields in characteristic
$p$. This functor is called tilting.
For its first description, choose some element $\varpi \in K^\times$ such that $|p|\leq |\varpi|<1$. Now consider:
	\[ 
			\underset{\Phi}\varprojlim K^\circ / \varpi.
	\]

			 This gives a perfect ring of characteristic $p$. We equip it with the inverse limit topology; note that each $K^\circ / \varpi$ has the discrete topology naturally. 	
		\begin{lemma}
    		\begin{enumerate}
    		\item There is a multiplicative homeomorphism
    			\[ 
				\underset{x\rightarrow x^p}\varprojlim K^\circ  \overset{\cong} \rightarrow \underset{\Phi}\varprojlim K^\circ / \varpi
				\]
    		given by projection. In particular, the right-hand side is independent of
    		$\varpi$ Moreover, we get a map
    			\[ 
				\underset{\Phi}\varprojlim K^\circ / \varpi \rightarrow K^\circ : x \rightarrow x^{\sharp}
				\]
			\item There is an element $\varpi^{\flat} \in 	\underset{\Phi}\varprojlim K^\circ / \varpi$  with $|		 			(\varpi^{\flat})^{\sharp}| = |\varpi|$ .Define
			\[
			K^{\flat}= ( \underset{\Phi}\varprojlim K^\circ / \varpi) \left[(\varpi^{\flat})^{-1}\right].
			\]
			\item There is a multiplicative homeomorphism	
			\[
				K^{\flat}=\underset{x \rightarrow x^p}\varprojlim K		
			\]
			In particular, there is a map $K^{\flat} \rightarrow K$ ,$x \rightarrow x^{\sharp}$ Then $K^{\flat}$ is a perfectoid 				field of characteristic $p$,
			\[ K^{\flat \circ} = \underset{x \rightarrow x^p}\varprojlim K^{\circ} \cong  \underset{\Phi}\varprojlim K^\circ / \varpi,
			 \]
			 and the rank-1-valuation on $K^{\flat}$ can be defined by $|x|_{K^{\flat}} = |x^{\sharp}|_K$ . We have $|K^{\flat \times}| = |K^{\times}|$. Moreover,
			 \[K^{\flat \circ}/ \varpi^{\flat} \cong K^{\circ}/\varpi ,\quad K^{\flat \circ}/\textgoth{m}^{\flat} = K^{\circ}/ \textgoth{m}\]
			 where \textgoth{m}, resp. $\textgoth{m}^{\flat}$, is the maximal ideal of $K^{\circ}$, resp. $K^{\flat \circ}$ 

			\item If $K$ is of characteristic $p$, then $K^{\flat} = K$. We call $K^{\flat}$ the tilt of $K$
		
		\end{enumerate}
		
		\end{lemma}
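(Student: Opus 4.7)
The plan is to treat parts (1)--(4) in order; parts (1) and (2) contain the real analytic content, while parts (3) and (4) are then essentially reorganizations. For part (1), I would define the forward map $\varprojlim_{x\to x^p} K^\circ \to \varprojlim_\Phi K^\circ/\varpi$ as componentwise reduction modulo $\varpi$; this is obviously multiplicative, continuous, and well-defined. The inverse map carries the content: given $(y_n)$ with $y_{n+1}^p = y_n$ in $K^\circ/\varpi$, pick arbitrary lifts $\tilde y_n \in K^\circ$ and set $x_n := \lim_{m\to\infty} \tilde y_{n+m}^{p^m}$. The key estimate is that $a \equiv b \pmod{\varpi^k}$ implies $a^p \equiv b^p \pmod{\varpi^{k+1}}$, which follows by expanding $(b+\varpi^k c)^p$ and using $|p|\le|\varpi|$ to control the middle binomial terms. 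This shows the defining sequence is Cauchy (hence convergent by completeness of $K$), independent of the chosen lifts, and satisfies $x_{n+1}^p = x_n$ after re-indexing. The forward and inverse maps are continuous for the inverse-limit topologies and mutually inverse, so they form multiplicative homeomorphisms, and defining $x^\sharp := x_0$ gives the promised map to $K^\circ$.

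For part (2), the goal is to produce $\varpi^\flat \in \varprojlim K^\circ/\varpi$ with $|(\varpi^\flat)^\sharp| = |\varpi|$. The preceding lemma tells us $|K^\times|$ is $p$-divisible, so for each $n$ there is $\alpha_n \in K^\circ$ with $|\alpha_n| = |\varpi|^{1/p^n}$. I would arrange compatibility $\alpha_{n+1}^p \equiv \alpha_n \pmod{\varpi}$ inductively using Frobenius surjectivity on $K^\circ/\varpi$: at each stage we adjust $\alpha_{n+1}$ within the coset dictated by the $p$-th root condition modulo $\varpi$ without changing its absolute value. The reduction of $(\alpha_n)$ then yields the desired $\varpi^\flat$, and $(\varpi^\flat)^\sharp = \lim_m \tilde\alpha_m^{p^m}$ has absolute value exactly $|\varpi|$ by the ultrametric inequality and the valuation control on the $\tilde\alpha_m$. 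One then defines $K^\flat$ as in the statement by inverting $\varpi^\flat$.

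For part (3), I identify $\varprojlim_{x\to x^p} K$ with $K^\flat$ by noting that any element of the former either already lies in $\varprojlim K^\circ$ or becomes so after multiplication by a sufficient power of $\varpi^\flat$, extending the homeomorphism from (1). The integer-ring identification $K^{\flat\circ} = \varprojlim_{x\to x^p} K^\circ \cong \varprojlim_\Phi K^\circ/\varpi$ is then immediate. That $K^\flat$ is a field of characteristic $p$ follows from the inverse-limit description, perfectness because the transition maps are Frobenius, and completeness because the inverse limit of discrete rings is complete. The valuation $|x|_{K^\flat} := |x^\sharp|_K$ is multiplicative since $\sharp$ is, and $|K^{\flat\times}| = |K^\times|$ by combining (2) with $p$-divisibility. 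The residue identifications $K^{\flat\circ}/\varpi^\flat \cong K^\circ/\varpi$ and $K^{\flat\circ}/\mathfrak{m}^\flat \cong K^\circ/\mathfrak{m}$ follow from the explicit inverse-limit description and the fact that $\mathfrak{m}^\flat$ is the preimage of $\mathfrak{m}$ under $\sharp$. Part (4) is then formal: in characteristic $p$, the earlier Proposition shows $K$ is perfect, so the $p$-th power is an automorphism and $\varprojlim_{x\to x^p} K$ collapses to $K$ via $x \mapsto x_0$, which is exactly $\sharp$.

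The main obstacle I expect is the careful verification in (2) that a truly compatible system with the correct absolute value exists; the naive choice using $p$-divisibility gives elements of the right valuation but not immediately compatible modulo $\varpi$, and this must be corrected using Frobenius surjectivity without disturbing the valuations. Once (1) and (2) are in place, the rest is largely bookkeeping: matching topologies, checking multiplicativity of $\sharp$, and unpacking the quotient identifications.
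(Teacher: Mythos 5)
The paper states this lemma without proof (it is imported verbatim from Scholze's \emph{Perfectoid Spaces}, Lemma 3.4), so there is no in-paper argument to compare against; your proposal reconstructs the standard one and is essentially correct. The load-bearing ingredients are all present: the estimate $a\equiv b\pmod{\varpi^k}\Rightarrow a^p\equiv b^p\pmod{\varpi^{k+1}}$ (valid because $|p|\le|\varpi|$), the resulting lift-and-take-$p^m$-th-powers construction of the inverse map and of $x^\sharp$, and the combination of $p$-divisibility of the value group with surjectivity of Frobenius on $K^\circ/\varpi$ to produce $\varpi^\flat$. The one point you should write out explicitly is the one you yourself flag in part (2): at level $0$ the compatibility $\alpha_1^p\equiv\alpha_0\pmod{\varpi}$ is vacuous, since $|\alpha_0|=|\varpi|$ forces $\alpha_0\equiv 0\pmod{\varpi}$, so the condition modulo $\varpi$ cannot by itself pin down $|(\varpi^\flat)^\sharp|$; the value $|(\varpi^\flat)^\sharp|=|\varpi|$ has to be extracted from level $1$, e.g.\ by observing that $(\varpi^\flat)^\sharp\equiv\tilde\alpha_1^{\,p}\pmod{\varpi^2}$ with $|\tilde\alpha_1^{\,p}|=|\varpi|>|\varpi|^2$ (equivalently, first build the element of absolute value $|\varpi|^{1/p}$ and then take its $p$-th power, which is how Scholze arranges it). For $n\ge 1$ the absolute values take care of themselves, since any $\beta$ with $\beta^p\equiv\alpha_n\pmod{\varpi}$ automatically has $|\beta^p|=|\alpha_n|>|\varpi|$. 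With that detail supplied, the remaining parts (3) and (4) are, as you say, bookkeeping.
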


	\begin{theorem} \label{tilting}
	Let $K$ be a perfectoid field.
	\begin{enumerate}
	\item Let $L$ be a finite extension of $K$. Then $L$ (with its natural topology as a finite-dimensional K-vector space) is a perfectoid field.
	
	\item (Tilting Correspondence): Let $K^{\flat}$ be the tilt of $K$. Then the tilting functor $L \mapsto L^{\flat}$ induces an equivalence
of categories between the category of finite extensions of $K$ and the category of finite
extensions of $K^{\flat}$ . This equivalence preserves degrees.
	
	\end{enumerate}
	\end{theorem}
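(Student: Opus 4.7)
The plan is to handle (1) by direct verification of the axioms defining a perfectoid field, with the nontrivial step being Frobenius surjectivity modulo $p$, and then to bootstrap (2) from the tilting construction of the previous lemma together with the reduction isomorphism $K^{\flat\circ}/\varpi^\flat \cong K^\circ/\varpi$.

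For part (1), I would equip $L$ with the unique extension of the nonarchimedean absolute value of $K$, so that $L^\circ$ is the integral closure of $K^\circ$ in $L$. Completeness of $L$ as a finite-dimensional normed $K$-vector space is standard, the residue characteristic remains $p$, and the value group $|L^\times|$ contains the dense $p$-divisible group $|K^\times|$, hence is itself nondiscrete of rank $1$. The real content is to show that Frobenius is surjective on $L^\circ/p$. For this I would first reduce to the Galois case by passing to the Galois closure, and then invoke Faltings' almost purity, which gives that $L^\circ$ is almost finite \'etale over $K^\circ$. Combined with the hypothesis that Frobenius is surjective on $K^\circ/p$, this forces the analogous surjectivity on $L^\circ/p$ up to an almost-zero error, which is harmless in rank $1$.

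For part (2), by (1) the assignment $L \mapsto L^\flat$ sends finite extensions of $K$ to finite extensions of $K^\flat$, and functoriality is immediate from the inverse-limit description $L^\flat = \varprojlim_{x \mapsto x^p} L$, with the sharp map providing the inverse on elements. Preservation of degree I would deduce from the isomorphism $L^{\flat\circ}/\varpi^\flat \cong L^\circ/\varpi$, which identifies both sides as free modules over $K^\circ/\varpi \cong K^{\flat\circ}/\varpi^\flat$ of rank equal to the respective degrees. For essential surjectivity, given a finite extension $M/K^\flat$, I would pick a generator $\alpha \in M^\circ$ with monic minimal polynomial $f \in K^{\flat\circ}[x]$, lift $f$ coefficientwise via the $\sharp$-map to some $\tilde f \in K^\circ[x]$, and verify using Hensel's lemma that $\tilde f$ cuts out a finite extension $L/K$ of the same degree whose tilt recovers $M$. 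Full faithfulness then follows by the same technique: a $K^\flat$-homomorphism $M_1 \to M_2$ corresponds under the inverse-limit identification to a compatible system of residue maps that assemble into a $K$-homomorphism between the untilts.

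The main obstacle is clearly the almost purity input in (1), together with the companion fact that the untilt produced in the essential-surjectivity argument is honestly a field of the correct degree rather than some larger \'etale algebra. Both reduce to controlling how finite \'etale covers behave under reduction modulo $\varpi$, and they constitute the technical core of the theory; once they are in hand, everything else is a bookkeeping exercise built on the multiplicative homeomorphism $K^\flat \cong \varprojlim_{x \mapsto x^p} K$ and the ring isomorphism $K^{\flat\circ}/\varpi^\flat \cong K^\circ/\varpi$ supplied by the preceding lemma.
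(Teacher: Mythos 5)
The paper does not actually prove this theorem: it is quoted from Scholze's \emph{Perfectoid Spaces} without argument, so there is no in-paper proof to compare yours against. Judged on its own terms, your outline has the right skeleton and correctly identifies almost purity as the technical heart, but be aware that you have inverted the logical order of the standard argument: in Scholze's development one does not prove (1) directly and then deduce (2); rather one first establishes the equivalence (2) by constructing the untilting functor (via Witt vectors or the ring $W(K^{\flat\circ})$), proving full faithfulness, and obtaining essential surjectivity from the statement that $K$ is algebraically closed whenever $K^\flat$ is (stated as a Proposition in this paper) together with a degree count; part (1) then falls out because every finite $L/K$ is the untilt of a finite extension of $K^\flat$, which is perfectoid for the trivial characteristic-$p$ reason (complete and perfect). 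Your route --- (1) first via Faltings' almost purity, then (2) --- is also legitimate and closer to the Faltings/Kedlaya--Liu treatment, but it makes almost purity load-bearing rather than a corollary.

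Two steps in your sketch need more care than you give them. First, ``lift $f$ coefficientwise via the $\sharp$-map'' is not well posed as stated: $\sharp$ is multiplicative but not additive, so $\tilde f$ is not canonically determined and its reduction mod $\varpi$ only agrees with that of $f$ up to the identification $K^{\flat\circ}/\varpi^\flat \cong K^\circ/\varpi$; one needs the approximation lemma (or the $\theta$-map from $W(K^{\flat\circ})$) to show the resulting extension is independent of choices and that its tilt recovers $M$. Second, ``harmless in rank $1$'' for the almost-zero error in Frobenius surjectivity on $L^\circ/p$ deserves a sentence: you must pass from surjectivity up to the maximal ideal to honest surjectivity on $L^\circ/\varpi$ for some $\varpi$ with $|p|\le|\varpi|<1$, which uses $p$-divisibility of the value group. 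Neither point is fatal, but both are exactly where a write-up of this theorem earns its keep.
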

	\begin{corollary}
	If $K$ is algebraically closed, then $K^{\flat}$ is, too. 
	\end{corollary}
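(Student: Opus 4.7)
The plan is to deduce the corollary as an essentially formal consequence of part (2) of Theorem \ref{tilting}, the tilting correspondence. The key algebraic fact I would invoke at the start is the standard equivalence that a field $F$ is algebraically closed if and only if it admits no finite extension of degree greater than $1$. (One direction is immediate; the other uses that any nonconstant irreducible polynomial $f \in F[x]$ of degree $>1$ would yield the nontrivial finite extension $F[x]/(f)$.) So the goal reduces to showing that every finite extension of $K^{\flat}$ is isomorphic to $K^{\flat}$ itself.

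Next I would assume $K$ is algebraically closed and translate this into the statement that the category of finite extensions of $K$ consists (up to isomorphism) of the single object $K$, since every finite extension has degree $1$. Applying part (2) of Theorem \ref{tilting}, the tilting functor $L \mapsto L^{\flat}$ is an equivalence between the category of finite extensions of $K$ and the category of finite extensions of $K^{\flat}$, and this equivalence preserves degrees.

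Now suppose $M$ is any finite extension of $K^{\flat}$. By essential surjectivity of the tilting equivalence, there is some finite extension $L / K$ with $L^{\flat} \cong M$. Since $K$ is algebraically closed we have $L = K$, and hence $M \cong K^{\flat \flat} = K^{\flat}$. Alternatively, one can argue purely by degrees: $[M : K^{\flat}] = [L : K] = 1$. Either way, $K^{\flat}$ has no nontrivial finite extensions and is therefore algebraically closed.

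I do not expect a real obstacle here; the proof is a straightforward unwinding of the categorical equivalence. The only subtle point worth flagging explicitly is the passage from "no nontrivial finite extensions" to "algebraically closed," which one might worry about in positive characteristic due to inseparability, but the equivalence between algebraic closure and the absence of finite extensions of degree $>1$ holds over an arbitrary field and requires no perfectness hypothesis.
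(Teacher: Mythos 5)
Your proof is correct and follows exactly the route the paper intends: the corollary is stated as an immediate consequence of the degree-preserving equivalence in part (2) of Theorem \ref{tilting}, and the paper itself supplies no further argument. The only blemish is the harmless notational slip $M \cong K^{\flat\flat}$ where you mean $M \cong L^{\flat} = K^{\flat}$ (taking $L = K$); your alternative degree count $[M:K^{\flat}] = [L:K] = 1$ is the cleanest version of the argument.
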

	\begin{corollary} \label{algebraic extensions}
	Let $K$ be a perfectoid field and $L/K$ an algebraic extention. Then $\hat{L}$ is also perfectoid. 
	\end{corollary}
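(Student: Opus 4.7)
The approach is to reduce to the case of finite extensions via Theorem \ref{tilting}(1) and then pass from $L$ to its completion $\hat L$.

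First, I would write $L=\bigcup_i L_i$ as the filtered union of its finite subextensions $L_i/K$. By Theorem \ref{tilting}(1) each $L_i$ is itself a perfectoid field, so the Frobenius endomorphism is surjective on $L_i^\circ/p$. Since the valuation on $L$ restricts to the unique valuation on each $L_i$, we have $L_i^\circ=L^\circ\cap L_i$ and hence $L^\circ=\bigcup_i L_i^\circ$. Filtered colimits commute with quotients and preserve surjectivity, so $L^\circ/p=\varinjlim_i L_i^\circ/p$ and the Frobenius is surjective on $L^\circ/p$.

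The next step is to promote this to $\hat L$. Because the residue characteristic is $p$ we have $|p|<1$, so the $p$-adic topology on $L^\circ$ agrees with the valuation topology, and $\hat L^\circ$ can be identified with the $p$-adic completion $\varprojlim_n L^\circ/p^n L^\circ$. The standard identification $\hat L^\circ/p\cong L^\circ/p$ then transfers surjectivity of Frobenius to $\hat L^\circ/p$. The remaining perfectoid axioms are easy to verify: $\hat L$ is complete and nonarchimedean by construction of the completion; its value group $|\hat L^\times|=|L^\times|\supseteq |K^\times|$ is non-discrete because $K$ is perfectoid; and its residue field extends the residue field of $K$, hence still has characteristic $p$.

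The main obstacle is really the completion step: one must verify carefully that the $p$-adic and valuation topologies on $L^\circ$ coincide (so that completing in the valuation sense is the same as $p$-adic completion) and that $p$-adic completion commutes with reduction modulo $p$. Everything else is a matter of patiently applying Theorem \ref{tilting}(1) and organizing the filtered colimit.
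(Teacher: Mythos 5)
The paper itself states this corollary without proof (it is quoted from Scholze), so there is nothing to compare against directly; but your argument is the standard one and is essentially correct in the case that matters most. The reduction to finite subextensions via Theorem \ref{tilting}(1), the observation that $L^\circ=\bigcup_i L_i^\circ$ because the rank-1 valuation extends uniquely to each algebraic extension of the complete field $K$, and the passage of surjectivity of Frobenius on $L_i^\circ/p$ to $L^\circ/p$ through the filtered colimit are all fine. The completion step is also fine in characteristic $0$: there $p$ is a nonzerodivisor in $L^\circ$ with $0<|p|<1$, the ideals $p^nL^\circ=\{x:|x|\le|p|^n\}$ are cofinal among valuation neighborhoods of $0$, and $\widehat{L^\circ}/p\cong L^\circ/p$ holds for the $p$-adic completion of a $p$-torsion-free ring.

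The one step that genuinely fails as written is the characteristic $p$ case, which the corollary does not exclude. There $p=0$ in $L$, so $|p|=0$, the $p$-adic topology on $L^\circ$ is discrete rather than the valuation topology, and $\varprojlim_n L^\circ/p^nL^\circ=L^\circ$ is not $\hat L^\circ$ unless $L$ was already complete; the sentence ``the $p$-adic topology on $L^\circ$ agrees with the valuation topology'' is simply false in this case. Two standard repairs: either replace $p$ throughout by a pseudouniformizer $\varpi$ with $|p|\le|\varpi|<1$ (using that surjectivity of Frobenius on $K^\circ/p$ is equivalent to surjectivity on $K^\circ/\varpi$) and complete $\varpi$-adically, or dispose of characteristic $p$ separately via the paper's Proposition that a characteristic-$p$ perfectoid field is the same as a complete perfect field: an algebraic extension $L$ of a perfect field is perfect, Frobenius is a homeomorphic bijection of $L$ and hence extends to a bijection of $\hat L$, and the value group remains non-discrete since it contains $|K^\times|$. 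With either patch your proof is complete.
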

	\begin{proposition}
	Let $K$ be a perfectoid field with tilt $K^{\flat}$. If $K^{\flat}$ is algebraically closed,
	then $K$ is also algebraically closed.
	\end{proposition}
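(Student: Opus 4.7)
The plan is to reduce to finite extensions and then invoke the tilting correspondence of Theorem~\ref{tilting} directly. Recall that a field $F$ is algebraically closed if and only if it admits no nontrivial finite extension (equivalently, every irreducible polynomial over $F$ has degree $1$). Thus it suffices to show that for every finite extension $L/K$ one has $L = K$.

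So let $L/K$ be a finite extension. By part (1) of Theorem~\ref{tilting}, the field $L$, equipped with its canonical topology as a finite-dimensional $K$-vector space, is again a perfectoid field, and therefore the tilt $L^\flat$ is defined. By part (2), the tilting functor sends $L$ to a finite extension $L^\flat/K^\flat$, and this assignment preserves degrees, i.e.\ $[L^\flat : K^\flat] = [L : K]$. Since $K^\flat$ is algebraically closed by hypothesis, the only finite extension of $K^\flat$ is $K^\flat$ itself, so $L^\flat = K^\flat$ and $[L^\flat : K^\flat] = 1$. Consequently $[L : K] = 1$, i.e.\ $L = K$. Since $L/K$ was an arbitrary finite extension, $K$ has no nontrivial finite extensions and is therefore algebraically closed.

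The whole argument is essentially one-line once Theorem~\ref{tilting} is granted; the genuine content --- namely, that the tilting functor is a degree-preserving equivalence between the categories of finite extensions of $K$ and of $K^\flat$ --- has already been stated and is being used as a black box. The only place one needs to be slightly careful is the very first step, where one must remember that checking algebraic closedness of a nonarchimedean complete field reduces to checking the absence of nontrivial \emph{finite} extensions (rather than arbitrary algebraic ones); this is a standard fact about splitting fields of single polynomials, and it is exactly what allows us to apply the tilting correspondence, which in the statement of Theorem~\ref{tilting} is formulated only for finite extensions. Hence the main obstacle in writing the proof is not in this corollary at all but in the underlying Theorem~\ref{tilting}.
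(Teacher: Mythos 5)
The paper states this proposition without any proof, so there is nothing in the source to compare against directly. Your deduction is formally valid granting Theorem~\ref{tilting} as a black box: the degree-preserving equivalence forces every finite extension $L/K$ to satisfy $[L:K]=[L^\flat:K^\flat]=1$, and the reduction of algebraic closedness to the absence of nontrivial \emph{finite} extensions is standard. Note that the paper itself argues in exactly this style for the converse direction, which it records as a corollary of Theorem~\ref{tilting}.

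That said, be aware of a likely circularity if the argument were unwound into a self-contained development. In the standard reference (Scholze, \emph{Perfectoid Spaces}, Prop.~3.8), this statement is proved \emph{before} and independently of the tilting correspondence, by a direct successive-approximation argument: given a monic polynomial over $K^\circ$, one uses the algebraic closedness of $K^\flat$ together with the surjectivity of $x \mapsto x^{\sharp}$ modulo $\varpi$ to produce a sequence of elements of $K^\circ$ on which the value of the polynomial shrinks by a fixed factor at each step, and completeness yields a root in the limit. The reason for that ordering is that the ``untilting'' direction of Theorem~\ref{tilting} is the hard one, and its proof uses the algebraically closed case as an input. The fact that the paper records the present statement as a Proposition, rather than as a Corollary like its converse, suggests the authors intend it to be established by such a direct argument. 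So your proof is correct relative to the theorem as stated, but it is not the proof one would give in a foundational treatment, and you should at least flag that you are taking the tilting correspondence as given rather than deriving it.
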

	
	%\begin{remark}
	%Let $K$ be a perfectoid field of charactiristic $p$ then the value group of $K$ is $p$-divisible.
	%\end{remark}
	\begin{theorem}
	\label{eq:3}
	Let $K$ be a perfectoid field and $F$ is a finite index subfield of it. Then $F$ is also a perfectoid field. %\cite{KL}
	\end{theorem}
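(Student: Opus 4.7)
The plan is to equip $F$ with the restriction of the rank-1 valuation on $K$ and verify each defining property of a perfectoid field in turn, leaving Frobenius surjectivity on $F^\circ/p$ for last as the main obstacle.

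Completeness of $F$ follows from a Galois-closure argument: let $L$ be a Galois closure of $K/F$; since $L/K$ is finite and $K$ is complete, $L$ is complete, and each $\sigma \in \operatorname{Gal}(L/F)$ acts on $L$ as a continuous isometry (valuation extensions are unique on finite extensions of complete fields), so any Cauchy sequence in $F$ converges in $L$ to a Galois-fixed limit, which therefore lies in $F = L^{\operatorname{Gal}(L/F)}$. In characteristic $p$, the separability of $K/F$ needed for the Galois closure comes from the fact that $F$ itself must be perfect --- an imperfect $\alpha \in F$ would produce an infinite ascending tower $F \subsetneq F(\alpha^{1/p}) \subsetneq F(\alpha^{1/p^2}) \subsetneq \cdots$ inside the perfect $K$, contradicting $[K:F] < \infty$. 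The residue field $k_F$ embeds into $k_K$, so the residue characteristic is $p$; and $|F^\times|$, having index dividing $[K:F]$ in the dense $|K^\times| \subseteq \mathbb R_{>0}$, is again dense, hence nondiscrete.

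Two further structural facts feed into the Frobenius step. The Lemma preceding the tilting construction gives $|K^\times|$ $p$-divisible; since the quotient of a torsion-free $p$-divisible subgroup of $\mathbb R$ by a finite-index subgroup is a finite $p$-divisible abelian group, and any finite $p$-divisible abelian group is $p$-torsion-free (the existence of a $p$-torsion element would force an infinite tower of elements of higher $p$-power order), every $p$-th root of an element of $|F^\times|$ taken in $|K^\times|$ must already lie in $|F^\times|$, so $|F^\times|$ is $p$-divisible. Similarly, $k_K$ is perfect (the surjective Frobenius on $K^\circ/p$ descends to $k_K$), and the same infinite-tower argument as in the characteristic-$p$ case above shows that the finite-degree subfield $k_F \subseteq k_K$ of a perfect field must itself be perfect.

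The main obstacle is Frobenius surjectivity on $F^\circ/p$ itself. Given $a \in F^\circ$, my plan is a Hensel-type iterative refinement using the two structural facts above: start from $b_0 \in F^\circ$ whose residue in $k_F$ is a $p$-th root of $\bar a$, and at each step produce $b_{n+1} = b_n + y_n$ where $y_n \in F^\circ$ is chosen with $|y_n|^p = |b_n^p - a|$ (using $p$-divisibility of $|F^\times|$) and with residue corrected (using perfectness of $k_F$) so that $|y_n^p + (b_n^p - a)|$ is strictly smaller than $|b_n^p - a|$; the cross-terms $p\sum_{k=1}^{p-1} \tfrac{1}{p}\binom{p}{k} b_n^{p-k} y_n^k$ in $(b_n+y_n)^p - b_n^p - y_n^p$ all lie in $pF^\circ$. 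The delicate technical point --- and the heart of the theorem --- is ensuring the defect $|b_n^p - a|$ actually descends below $|p|$, at which point completeness of $F$ delivers the required $b$. A much cleaner alternative in the special case $p \nmid [K:F]$ is the norm trick: any $c \in K^\circ$ with $c^p \equiv a \pmod{pK^\circ}$ satisfies $N_{K/F}(c)^p \equiv a^n \pmod{pF^\circ}$ (expand the polynomial $T \mapsto N_{K/F}(c^p - Tk)$ in $T$ to see that the difference is $p$ times an element of $F^\circ$), and then $b = a^v N_{K/F}(c)^u$ for integers $u,v$ with $un + vp = 1$ solves $b^p \equiv a \pmod{pF^\circ}$.
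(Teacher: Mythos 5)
The paper states this theorem without any proof, so there is nothing to compare your argument against; on its own terms your proposal has two genuine gaps, one of which is fatal to the statement as literally written. The completeness step is circular: you conclude that each $\sigma\in\operatorname{Gal}(L/F)$ is an isometry from ``uniqueness of valuation extensions over complete fields,'' but $\sigma$ fixes $F$, not $K$, so $|\sigma(\cdot)|$ is merely another extension to $L$ of the valuation on $F$, and uniqueness of \emph{that} extension requires $F$ to be henselian --- essentially the completeness you are trying to prove. The gap cannot be patched, because completeness of $F$ is false in general: $\mathbb{C}_p$ is abstractly isomorphic to $\mathbb{C}$, so by Artin--Schreier it has real closed subfields $F$ with $[\mathbb{C}_p:F]=2$; such an $F$ cannot be closed (a closed index-$2$ subfield would make the nontrivial element of $\operatorname{Gal}(\mathbb{C}_p/F)$ a continuous involution of $\mathbb{C}_p$, hence a torsion element of $G_{\mathbb{Q}_p}$, which is torsion-free), so $F$ is dense, not complete, and not perfectoid for the restricted valuation. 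The theorem therefore needs the extra hypothesis that $F$ is closed (equivalently complete) in $K$ --- which does hold in the paper's one application of it, where the subfield is a completed perfect closure --- and under that hypothesis your value-group, residue-field, and perfectness-in-characteristic-$p$ arguments do go through.

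The second gap is the one you flag yourself: surjectivity of Frobenius on $F^\circ/p$, ``the heart of the theorem,'' is only a plan. The Newton-type iteration is set up, but the claim that makes it work --- that the defect $|b_n^p-a|$ can actually be driven below $|p|$ rather than stalling at some intermediate value --- is exactly what is asserted without proof, and it is the genuinely delicate point. The norm trick for $p\nmid[K:F]$ is closer to complete, but it too silently uses that $F$ is henselian (so that conjugates of integral elements are integral, $N_{K/F}(K^\circ)\subseteq F^\circ$, and the error term lands in $pF^\circ$), and it breaks when $a$ is not a unit, since $a^v$ with $v<0$ need not lie in $F^\circ$; one must first reduce to units and a pseudouniformizer. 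As written, the proposal verifies the easy axioms but not the one carrying the content of the theorem.
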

	
	\begin{proposition} Let $K$ be an infinitely ramified extention of a local field $F$. If $\hat{K}$ is perfectoid, then $K$ is infinitely wildly ramified over $F$.
	\begin{proof}
	If $K$ is not infinitely wildly ramified, then its value group is not p-divisible. 
	\end{proof}
	\end{proposition}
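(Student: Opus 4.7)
The plan is to argue by contrapositive, using the lemma stated earlier that the value group of any perfectoid field is $p$-divisible. So I assume $K/F$ is not infinitely wildly ramified and aim to deduce that $|\hat{K}^\times|$ fails to be $p$-divisible, which forces $\hat{K}$ to not be perfectoid.

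First I would set up the value-group picture. Fix the unique extension of the normalized discrete valuation $v_F$ to $\bar F$; for any finite subextension $E/F$ contained in $K$, elements of $E^\times$ take values in $\tfrac{1}{e(E/F)}\mathbb{Z}$, and conversely a uniformizer of $E$ realizes $\tfrac{1}{e(E/F)}$. Hence, writing the value group additively inside $\mathbb{Q}$,
\[
  v(K^\times) \;=\; \bigcup_{\substack{E/F \text{ finite}\\ E \subset K}} \tfrac{1}{e(E/F)}\mathbb{Z}.
\]
Next I would note that completing with respect to a rank-$1$ valuation leaves the value group unchanged: any nonzero element of $\hat K$ is a limit of a Cauchy sequence in $K$ whose valuations eventually stabilize. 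Therefore $v(\hat K^\times) = v(K^\times)$.

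Now I would translate the hypothesis. The extension $K/F$ being infinitely wildly ramified means that $\sup_E v_p(e(E/F)) = \infty$, where $v_p$ is the $p$-adic valuation and $E$ runs over finite subextensions of $K/F$. If this fails, there is a uniform bound $N$ with $v_p(e(E/F)) \le N$ for all such $E$. Consequently every denominator appearing in $v(K^\times) \subset \mathbb{Q}$ has $p$-part at most $p^N$; in particular $\tfrac{1}{p^{N+1}} \notin v(K^\times)$, while $1 = v(\pi_F) \in v(K^\times)$. Thus $v(K^\times)$, and hence $v(\hat K^\times)$, is not $p$-divisible. Applying the earlier lemma, $\hat K$ cannot be a perfectoid field, contradicting the hypothesis.

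The main subtlety is the third step: one has to be careful that "not infinitely wildly ramified" really does cap the $p$-part of every ramification index uniformly, rather than only along a particular tower. Since ramification indices are multiplicative in towers and $K$ is the union of its finite subextensions, the uniform bound on $v_p(e(E/F))$ is equivalent to the $p$-Sylow of the ramification filtration of $K/F$ being finite, which is exactly the customary meaning of "not infinitely wildly ramified." Once that point is nailed down, the rest of the argument is the essentially formal contrast between $p$-divisibility of $v(K^\times)$ and boundedness of the wild ramification.
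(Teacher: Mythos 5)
Your argument is correct and is essentially the paper's own proof: the paper's entire justification is the one-line contrapositive ``if $K$ is not infinitely wildly ramified, then its value group is not $p$-divisible,'' combined implicitly with the earlier lemma that a perfectoid field has $p$-divisible value group. You have simply filled in the details the paper leaves unstated (the description of $v(K^\times)$ as a union over finite subextensions, the invariance of the value group under completion, and the uniform bound on $v_p(e(E/F))$), all of which are sound.
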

	\begin{example}
	Let $K$ be a local field and $K^{tame}$ be its maximal tamely ramified extention. Then $\widehat{K^{tame}}$ is not perfectoid.
	\end{example}
%	\newpage
\section{Examples}
	\subsection{The characteristic $p$ case: }
	Let $K$ be a chracteristic $p$ perfectoid field with residue field $k$. Then $K$ contains $k\widehat{((t^{\frac{1}{p^\infty}}))}$, where $t$ is any element of $K$ with $0<|t|<1$; i.e. $t$ is a pesudouniformizer of $K$:
	\begin{itemize}
	\item $\mathbb{F}_p \xhookrightarrow[\text{}]{\text{Teichmüller lift}}  K^{\flat}$, since $char K^{\flat} = p$;
	\item $K^{\flat}$ is nonarchimedian field of characteiristic $p$, so $\mathbb{F}_p((t)) \xhookrightarrow [\text{}]{\text{}}  K^{\flat} $;
	\item $K^{\flat}$ is perfect, thus $K^{\flat}$ contains the perfect closure of $\mathbb{F}_p((t))$ which is $\mathbb{F}_p((t^{\frac{1}{p^\infty}}))$;
	\item $K^{\flat}$ is $t$-adically complete so it contain the $t$-adic completion of
	$\mathbb{F}_p((t^{\frac{1}{p^\infty}}))$ which is  $\widehat{\mathbb{F}_p((t^{\frac{1}{p^\infty}}))}$.
	\end{itemize}
	\subsection{The $\widehat{\mathbb{Q}_p(p^{\frac{1}{p^\infty}}})$ case }
	\begin{example}
	Let $K = \widehat{\mathbb{Q}_p(1/p^\infty)}$. We want to show that $K$ is perfectoid, and then determine its tilt. \\
	A field shoud satisfy three condtion to be perfectoid field
	\begin{itemize}
	
	\item $K$ should be complete,
	\item $K$ be nondiscretly valued,
	\item $K^{\circ}/p$ should be semiperfect.
	\end{itemize}
	The first condition is true by defenition. Also, we have $|K^{\times}| = \bigcup_{n=1}^{\infty} 1/p^n \mathbb{Z} = \mathbb{Z}[1/p]=\mathbb{Z}_{\lbrace p \rbrace}$, so it is nondiscrete and thus secend condition holds too. Now we should check the semiperfectness condition. Note that completeion does not change the residue ring. We have:
	\[ K^{\circ}/p = \frac{\widehat{\mathbb{Z}_p [p^{1/p^\infty}]}}{p} \cong 	\frac{\mathbb{Z}_p [p^{1/p^\infty}]}{p} \cong  \frac{\frac{\mathbb{Z}_p [t^{1/p^\infty}]}{<t-p>}}{p}  \cong \frac{\mathbb{F}_p [t^{1/p^\infty}]}{<t-p>} = \frac{\mathbb{F}_p [t^{1/p^\infty}]}{<t>} \]
	so the third condition is also true because trivially  $\frac{\mathbb{F}_p [t^{1/p^\infty}]}{<t>}$ is semiperfect. Thus $K$ is a perfectoid field. \\
	Now we want to find out what is $K^{\flat}$ ( the tilt of $K$):	We see that $K^{\flat} = \underset{x \mapsto x^p}\varprojlim K $
	contains the element $t:=(p,p^{1/p},p^{1/p^2},\dots)$ and $|t|_{K^{\flat}} = |p|_K$ so $t$ is a pseudouniformizer of $K^{\flat}$. Since $K^{\flat}$ is a perfectoid field of char.	 $p$ and residue field $\mathbb{F}_p $     
	$ (k_K = k_{K^\flat})$, so $K^{\flat}$ contains $\widehat{\mathbb{F}_p((t^{1/p^\infty}))}$. We want to show that $K^{\flat} =\widehat{\mathbb{F}_p((t^{1/p^\infty}))}$. \\
	
	We know that: 
	\[ K^{\flat\circ} = \underset{\Phi}\varprojlim K^{\flat}/p ,\quad K^{\flat} =K^{\flat\circ}[\frac{1}{t}]. 
	\]
	We have:
	\[K^{\circ}/p \cong \frac{\mathbb{F}_p[t^{\frac{1}{p^\infty}}]}{t} \Rightarrow K^{\flat\circ} = \underset{\Phi}\varprojlim K^{\circ}/p  \cong {\underset{\Phi}\varprojlim \frac{\mathbb{F}_p[t^{\frac{1}{p^\infty}}]}{t}} \cong  \widehat{\mathbb{F}_p[[t^{\frac{1}{p^\infty}}]]}.
	\]
	so $ K^{\flat\circ} = \widehat{\mathbb{F}_p[[t^{\frac{1}{p^\infty}}]]} \Longrightarrow K^{\flat} = \widehat{\mathbb{F}_p((t^{\frac{1}{p^\infty}}))} $ and the example is complete now!
	
	\end{example}
	
	\begin{remark}
	If $R$ is a perfect ring of char. $p$ and $f \in R$ is a nonzerodivisor, then $\underset{\Phi}\varprojlim \frac{R}{f^n}=f$-adic completion of $R$. For example ${\underset{\Phi}\varprojlim \frac{\mathbb{F}_p[t^{\frac{1}{p^\infty}}]}{t}} =  \widehat{\mathbb{F}_p[[t^{\frac{1}{p^\infty}}]]}$.
	\end{remark}

		%\newpage

	\subsection{The $\widehat{\mathbb{Q}_p(\mu_{p^\infty}})$ case }
	\begin{example}
	Let $K = \widehat{\mathbb{Q}_p(\mu_p^\infty)}$.  We will show that $K$ is perfectoid and then find its tilt.
	 \\
	
	This extention is khown as the cyclotomic perfectoid field and sometimes denoted by $\mathbb{Q}_{p}^{cycl}$. Like the previous example, we should check three conditions: completeness, semiperfectness and being nondiscretely valued.
	\begin{itemize}
	\item by defenition $K$ is complete, 
	\item we see that the value group of $K$ is:
	\[ |K^\times| = \bigcup_{n=1}^{\infty}\frac{1}{p^n}\mathbb{Z}
	\]
	 which is nondiscrete.
	\item now we should check the semiperfectness condition:
	
	\[ \frac{K^{\circ}}{p} =\frac{\widehat{\mathbb{Z}_{p}[\mu_{p}^{\infty}]}}{p} = \frac{\mathbb{Z}_{p}[\mu_{p}^{\infty}]}{p}
	=\]
	
	\[=
	\frac{\mathbb{Z}_p[\zeta_{p},\zeta_{p^2},\zeta_{p^3}, \dots]}{p} \cong \frac{\frac{\mathbb{Z}_p[x_1,x_2,x_3,\dots]}{<x_1^{p-1}+x_1^{p-2}+\dots +x_1+1,x_2^{p}-x_1,x_3^{p}-x_2, \dots>}}{p} \]
	\[ \cong \frac{\mathbb{F}_p[x,x^{1/p},x^{1/p^2},\dots]}{<x^{p-1}+x^{p-2}+ \dots +x+1 >} =\frac{\mathbb{F}_p[x^\frac{1}{p^\infty}]}{<x^{p-1}+x^{p-2}+ \dots +x+1 >}=\]
	
	\[\frac{\mathbb{F}_p[x^\frac{1}{p^\infty}]}{\frac{x^{p}-1}{x-1}}=\frac{\mathbb{F}_p[x^\frac{1}{p^\infty}]}{\frac{(x-1)^{p}}{x-1}}=\frac{\mathbb{F}_p[x^\frac{1}{p^\infty}]}{(x-1)^{p-1}} = \frac{\mathbb{F}_p[(1+t)^{1/p^\infty}]}{t^{p-1}}
	\]

	\[=\frac{\mathbb{F}_p[(1+t^{1/p^\infty})]}{t^{p-1}} = \frac{\mathbb{F}_p[t^{1/p^\infty}]}{t^{p-1}}.
	\]
	The last expression is semiperfect trivially, and we have shown that $K$ is perfectoid by semiperfectness of $K^{\circ}/p \cong \frac{\mathbb{F}_p[t^{1/p^\infty}]}{t^{p-1}} $. 		
	\end{itemize}
	Now we want to find out the tilt $K^{\flat}$ of $K$. We have:
	\[ K^{\flat\circ}=\underset{\Phi}\varprojlim K^{\flat}/p \cong \underset{\Phi}\varprojlim \frac{\mathbb{F}_p[t^{1/p^\infty}]}{t^{p-1}}= \widehat{\mathbb{F}_p[[t^{1/p^\infty}]]} \Rightarrow K^{\flat} = \widehat{\mathbb{F}_p((t^{1/p^\infty}))}
	\]
	
	Considering above computations, the example is done!
	\end{example}

\subsection{The $\mathbb{C}_p$ case }
\begin{example}
Let $\mathbb{C}_p:=\widehat{\bar{\mathbb{Q}}}_p$. We want to show that it is perfectoid and determine its tilt. We know that $\mathbb{C}_p$ is also the completion of algebraic closure of $\mathbb{Q}_p^{cycl}$ (we can use $\widehat{\mathbb{Q}_p(p^{\frac{1}{p^\infty}}})$ instead of $\mathbb{Q}_p^{cycl}$, too), so by \eqref{algebraic extensions} it is a perfectid field. Also, by titling correspondence (part $2$ of \eqref{tilting}), we have that $\mathbb{C}_p^\flat$ is equal to the completion of algebraic closure of $\mathbb{Q}_p^{cycl,\flat} = \widehat{\mathbb{F}_p((t^{1/p^\infty}))}$, which itself is equal to $\widehat{\overline{\mathbb{F}_p((t))}}$, thus we have $\mathbb{C}_p^\flat = \widehat{\overline{\mathbb{F}_p((t))}}$.

\end{example}

\subsection{The Converse of Tilting Correspondence (Tilting Correspondence for Subfields) Doesn't Hold}
	In this section we state Problem 38 of The Nonarchimedean Scottish Book \cite{NAS}. Before going to it, we state two theorems that we need them.
	\begin{theorem}
		\label{eq:2}

	(Ax-Sen-Tate \cite{AX}): Let $K$ be a field which is complete with respect to a valuation of rank one and $C$ be the completion of an algebraic closure of $K$. By continuity, the absolute Galios group $G_k \coloneqq Gal(K^{sep}/K)$ of K acts on $C$. Then, the fixed field in $C$ of this action equals to the completed perfection of $K$; i.e. $(\hat{\bar{K}})^G = C^G = \widehat{K^{perf}}$.
		\end{theorem}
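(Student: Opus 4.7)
\bigskip

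\noindent\textbf{Proof Proposal.} The plan is to prove the two inclusions $\widehat{K^{\mathrm{perf}}} \subseteq C^{G}$ and $C^{G} \subseteq \widehat{K^{\mathrm{perf}}}$ separately. The first is essentially formal, while the second rests on a quantitative approximation estimate (the Ax--Sen lemma) which constitutes the technical heart of the theorem.

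\medskip

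\noindent\emph{Step 1 (easy inclusion).} I would first show that every element of $K^{\mathrm{perf}}$ is $G$-fixed. In characteristic $0$ this is trivial since $K^{\mathrm{perf}} = K$ and $K \subseteq K^{\mathrm{sep}}$ is the base field. In residue characteristic $p$ with $\mathrm{char}\,K = p$, an element $x \in K^{\mathrm{perf}}$ satisfies $x^{p^{n}} \in K$ for some $n$; for $\sigma \in G$ this gives $\sigma(x)^{p^{n}} = \sigma(x^{p^{n}}) = x^{p^{n}}$, and because $p^{n}$-th roots are unique in a field of characteristic $p$, $\sigma(x) = x$. Since each $\sigma$ extends by continuity to an isometry of $C$, the fixed field $C^{G}$ is closed in $C$; taking closures gives $\widehat{K^{\mathrm{perf}}} \subseteq C^{G}$.

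\medskip

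\noindent\emph{Step 2 (the Ax--Sen approximation lemma).} The key lemma I would isolate states: there exists a constant $c > 0$ (depending only on $p$) such that for every $\alpha \in \overline{K}$,
\[
\inf_{\beta \in K^{\mathrm{perf}}} |\alpha - \beta| \;\leq\; c \cdot \sup_{\sigma \in G} |\sigma(\alpha) - \alpha|.
\]
My approach to this lemma is by induction on the separable degree $[K(\alpha)^{\mathrm{sep}} : K]$. First, by replacing $\alpha$ with a suitable $p^{N}$-th root one reduces to the separable case. In the separable case, one writes the minimal polynomial of $\alpha$ over $K$, considers its Galois conjugates $\alpha = \alpha_{1},\ldots,\alpha_{n}$, and uses a Newton polygon / resultant argument on $\prod_{i \neq 1}(\alpha - \alpha_{i})$ to find a proper subfield element that is closer to $\alpha$ than a controlled multiple of $d(\alpha) \coloneqq \sup_{\sigma} |\sigma(\alpha) - \alpha|$; iterating yields the bound.

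\medskip

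\noindent\emph{Step 3 (hard inclusion, given the lemma).} Take $\alpha \in C^{G}$. Since $\overline{K}$ is dense in $C$, pick $\alpha_{n} \in \overline{K}$ with $\alpha_{n} \to \alpha$. Because $\sigma(\alpha) = \alpha$ and $\sigma$ is an isometry,
\[
|\sigma(\alpha_{n}) - \alpha_{n}| = |\sigma(\alpha_{n}) - \sigma(\alpha) + \alpha - \alpha_{n}| \leq |\alpha - \alpha_{n}|,
\]
so $d(\alpha_{n}) \to 0$. By the lemma there exist $\beta_{n} \in K^{\mathrm{perf}}$ with $|\alpha_{n} - \beta_{n}| \leq c\, d(\alpha_{n}) \to 0$, hence $\beta_{n} \to \alpha$ in $C$, proving $\alpha \in \widehat{K^{\mathrm{perf}}}$.

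\medskip

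\noindent\emph{Main obstacle.} The inductive approximation lemma in Step 2 is the only nontrivial ingredient; everything else is a density and continuity argument. The delicate point there is producing, from $\alpha \in \overline{K}$ with small $d(\alpha)$, a strictly smaller-degree element of $\overline{K}$ (eventually landing in $K$, and thence in $K^{\mathrm{perf}}$ after the inseparable clean-up) that approximates $\alpha$ with a constant independent of the degree. This requires careful Newton polygon estimates for the derivative of the minimal polynomial, and is where Sen's original insight (sharpened by Ax) does the real work.
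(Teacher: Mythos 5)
The paper does not actually prove this theorem---it is quoted from Ax \cite{AX} and used as a black box---so there is no in-paper argument to compare your proposal against; what follows measures it against the cited proof. Your outline is structurally the standard one and is correct: the easy inclusion $\widehat{K^{\mathrm{perf}}}\subseteq C^{G}$ via pure inseparability plus closedness of $C^{G}$, the quantitative approximation lemma, and the density argument in Step 3 are exactly how Ax proceeds. Two caveats. First, the reduction in Step 2 is stated backwards: to reach the separable case you replace $\alpha$ by its $p^{N}$-th \emph{power} $\alpha^{p^{N}}\in K^{\mathrm{sep}}$ (since $\overline{K}/K^{\mathrm{sep}}$ is purely inseparable), approximate it by some $\beta_{0}\in K$, and then set $\beta=\beta_{0}^{1/p^{N}}\in K^{\mathrm{perf}}$, transferring the bound via $|\alpha-\beta|^{p^{N}}=|\alpha^{p^{N}}-\beta_{0}|$ and $d(\alpha^{p^{N}})=d(\alpha)^{p^{N}}$; taking a $p^{N}$-th root of $\alpha$ goes the wrong way. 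Second, Step 2 as written is a statement of intent rather than a proof: the descent that makes the induction close is the lemma that if all roots of the minimal polynomial $f$ of $\alpha$ lie within $r$ of $\alpha$, then a suitably chosen higher derivative of $f$ (with the order of differentiation picked relative to powers of $p$ so as to control the binomial coefficients) has a root within $c\cdot r$ of $\alpha$; iterating down to a linear polynomial produces the element of $K$, with $c=p^{p/(p-1)^{2}}$ in mixed characteristic and $c=1$ in equal characteristic $p$ or residue characteristic $0$. That root-of-derivative estimate is where the real work sits, and your sketch correctly locates it but does not supply it. With those two repairs the proposal is a faithful reconstruction of the proof the paper is citing.
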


	\begin{remark}
	In fact $\ref{eq:2}$ says that the two operations \textit{completion} and \textit{taking fixed field} commute with each other; i.e $(\hat{\bar{K}})^G = \widehat{(\bar{K})^G}$. Also, if $L/K$ is an algebraic Galios extention with $Gal(L/K) \cong  G_1$, then we have $(\widehat{L^{perf}})^{G_1} = \widehat{(L^{perf})^{G_1}} = \widehat{K^{perf}}$; i.e. \ref{eq:2} holds for any algebraic extention $L$ of $K$, after perfection. The reason is as follow: We know that $\bar{L}=\bar{K}$. Let $G=G_K$ and $H=G_L$. Then we have $G_1 \cong \frac{G}{H}$. If we apply \ref{eq:2} to  $K$ and $L$, we have 
$(\hat{\bar{K}})^G =\widehat{K^{perf}}$ and $(\hat{\bar{L}})^G =\widehat{L^{perf}}$. So:
\[
(\widehat{L^{perf}})^{G_1} = ((\hat{\bar{L}})^H)^{G/H} = (\hat{\bar{L}})^G = (\hat{\bar{K}})^G = \widehat{K^{perf}}.
\]

	\end{remark}
	
	\begin{theorem}
	
	(Kedlaya-Temkin \cite{KT}): There exist nonsurjective endomorphisms of $\mathbb{C}_p^{\flat}$.
	\end{theorem}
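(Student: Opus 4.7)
The plan is to establish the theorem by exhibiting $\mathbb{C}_p^\flat$ as isomorphic, as a complete nonarchimedean valued field, to one of its proper closed subfields $L$; composing a continuous isomorphism $\varphi:\mathbb{C}_p^\flat \xrightarrow{\sim} L$ with the inclusion $\iota:L\hookrightarrow \mathbb{C}_p^\flat$ then produces a continuous endomorphism $\iota\circ\varphi$ with image $L\subsetneq \mathbb{C}_p^\flat$, hence non-surjective.

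Using the presentation $\mathbb{C}_p^\flat \cong \widehat{\overline{\mathbb{F}_p((t))}}$ computed in the examples above, I would first pick a second pseudouniformizer $s\in\mathbb{C}_p^\flat$ (with $0<|s|<1$ and with $s$ chosen transcendental over $\overline{\mathbb{F}_p((t))}$ in a suitably ``generic'' fashion). Embedding $\mathbb{F}_p((s))\hookrightarrow \mathbb{C}_p^\flat$ through the Teichmüller lift of $\mathbb{F}_p$ together with the chosen $s$, I would take the algebraic closure of the image inside $\mathbb{C}_p^\flat$ and then its topological closure, obtaining a candidate subfield $L_s := \widehat{\overline{\mathbb{F}_p((s))}} \subseteq \mathbb{C}_p^\flat$. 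By construction this $L_s$ is closed, complete, algebraically closed, and has residue field $\overline{\mathbb{F}_p}$ and $p$-divisible value group $\mathbb{Q}$.

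Next I would argue that $L_s$ is abstractly isomorphic to $\mathbb{C}_p^\flat$ as a topological valued field. Both are completed algebraic closures of complete discretely valued characteristic-$p$ fields with residue field $\mathbb{F}_p$; a Kaplansky-style uniqueness theorem for maximally complete algebraically closed valued fields (cardinality, residue field, and value group all matched) would then supply the required continuous isomorphism $\varphi$.

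The main obstacle --- and the substantive content of the Kedlaya--Temkin phenomenon --- is the final step: showing that $s$ can be chosen so that $L_s \subsetneq \mathbb{C}_p^\flat$. Since $L_s$ and $\mathbb{C}_p^\flat$ share residue field, value group, and cardinality $2^{\aleph_0}$, no coarse invariant distinguishes them; one must exhibit an explicit element of $\mathbb{C}_p^\flat$ not approximable by the countable subfield $\overline{\mathbb{F}_p((s))}$ whose completion is $L_s$. I expect this to require a careful diagonal construction against a fixed enumeration of $\overline{\mathbb{F}_p((s))}$, producing a Cauchy series in $\mathbb{C}_p^\flat$ whose limit is bounded away from every element of $\overline{\mathbb{F}_p((s))}$ at a controlled scale --- so that the limit cannot lie in the closed subfield $L_s$. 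Once this non-density step is secured, composing with $\varphi$ yields the desired non-surjective endomorphism, completing the proof.
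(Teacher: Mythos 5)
The paper does not prove this theorem; it imports it from \cite{KT}, so there is no internal proof to compare against. Judged on its own terms, your reduction is the right skeleton, and it is in fact the skeleton of the Kedlaya--Temkin argument: exhibit a proper closed subfield $L$ of $\mathbb{C}_p^\flat$ together with a continuous isomorphism $\mathbb{C}_p^\flat \cong L$, and compose with the inclusion. Two points in the middle need repair, though. First, the appeal to a Kaplansky-style uniqueness theorem is misplaced: $\mathbb{C}_p^\flat = \widehat{\overline{\mathbb{F}_p((t))}}$ is \emph{not} maximally (spherically) complete, so the uniqueness theorem for maximally complete fields with prescribed residue field and value group does not apply to it. Fortunately you do not need it: the isomorphism $\mathbb{F}_p((t)) \cong \mathbb{F}_p((s))$ sending $t \mapsto s$ is an isomorphism of complete discretely valued fields (the valuation of $\mathbb{C}_p^\flat$ restricts to the $s$-adic one on $\mathbb{F}_p[s]$ by the ultrametric inequality), it extends uniquely to the algebraic closures because valuations extend uniquely over henselian fields, and then it passes to completions. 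This gives $L_s \cong \mathbb{C}_p^\flat$ directly.

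The genuine gap is the step you yourself flag as the main obstacle, and the method you sketch for it cannot work as stated: $\overline{\mathbb{F}_p((s))}$ is not countable --- $\mathbb{F}_p((s))$ already has cardinality $2^{\aleph_0}$ --- so there is no "fixed enumeration" to diagonalize against. This is precisely why the theorem is nontrivial: $L_s$ and $\mathbb{C}_p^\flat$ agree in residue field, value group, cardinality, and even transcendence degree over $\mathbb{F}_p$, so no counting argument separates them. The actual content of \cite{KT} is a quantitative approximation estimate: for a carefully chosen pseudouniformizer $s$ (a lacunary series in fractional powers of $t$ with rapidly growing exponents), one shows that $t$ cannot be approximated arbitrarily well by elements algebraic over $\mathbb{F}_p((s))$, by controlling the valuations that such algebraic elements can realize. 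Without an argument of this kind, "choose $s$ suitably generic so that $L_s$ is proper" is an assertion of the theorem rather than a proof of it. As written, the proposal is a correct reduction with the essential analytic step missing.
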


	\begin{example}
	
	Let $K$ be a perfectoid field of characteristic $0$. Let $E$ be a perfectoid subfield of the tilt $K^{\flat}$ of $K$. Is $E$ necessarily the tilt of a perfectoid subfield of $K$? \\
(due to Ofer Gabber, see \cite{NAS})	To obtain a negative answer with $[K^{\flat}:E]$ finite, let $K$ be the completion of $\mathbb{ Q}_p(\mu_{p^\infty})$. The tilt of K is the completed perfect closure of $\mathbb{F}_p((t))$. For any integer $m>1$ coprime to p, the completed perfect closure of $\mathbb{F}_p((t^m))$ is a perfectoid subfield E of K of index m (by \ref{eq:3}). If E is the tilt of a subfield F of K, then by Ax-Sen-Tate, $F$ is itself the completion of a subfield of $\mathbb{Q}_p(\mu_{p^\infty})$; we can rule this out by requiring m to be coprime to $p(p-1)$.
	\\
	(due to Kiran S. Kedlaya and Ehsan Shahoseini, see \cite{NAS}) One can also obtain negative answers using the existence of nonsurjective endomorphisms of $\mathbb{C}_p^{\flat}$. If $K = \mathbb{C}_p$, then we can find a subfield $E$ of $K^{\flat}$ which is again the completed algebraic closure of a power series field, but which is strictly smaller than $K^{\flat}$. Such a field cannot be the tilt of a perfectoid subfield of $K$: such a field would itself have to be algebraically closed (and complete), but there are no complete algebraically closed fields between $\mathbb{Q}_p$ and $\mathbb{C}_p$.
	 
	\end{example}
	
		\newpage
%\section{References}

	%\section{Omitted Proof in Section~\ref{sec:examples}}
	%\label{app:1}

\end{document}